\tikzset{every picture/.style={line width=0.5pt}} 
\font\bigbold=cmbx12
\def\maketitlefull#1#2#3#4#5{
  \centerline {\bigbold #1}
  \medskip
  \centerline {\bigbold #2}
  \medskip
  \centerline {\tensc #3}
  \medskip
  \centerline {\sl #4}
\centerline {\sl #5}
  \bigskip
}
\def\maketitle#1{
  \centerline {\bigbold #1}
}
\titleformat{\section}{\bf}{\thesection.}{0.5em}{}
\titlespacing{\section}{0em}{1.5\bigskipamount}{\medskipamount}
\titleformat{\subsection}{\bf}{\thesubsection.}{0.5em}{}
\titlespacing{\subsection}{0em}{\bigskipamount}{\medskipamount}
\titleformat{\paragraph}[runin]{\bf}{}{0em}{}[.]
\titlespacing{\paragraph}{0em}{\medskipamount}{*0.75}
\font\tensc=cmcsc10
\def\case#1. {{\it Case #1}.\enspace}
\def\parlabel#1. {\medskip\noindent{\bf #1.}\enspace}
\def\sqr#1#2{{\vcenter{\vbox{\hrule height.#2pt
        \hbox{\vrule width.#2pt height#1pt \kern#1pt
          \vrule width.#2pt}
        \hrule height.#2pt}}}}
\def\slug{\quad\hbox{\kern1.5pt\vrule width2.5pt height6pt depth1.5pt\kern1.5pt}\medskip}
\setlist[enumerate]{itemsep=\smallskipamount,parsep=0pt,label={\rm \roman*)}}
\setlist[itemize]{itemsep=\smallskipamount,parsep=0pt}
\newtheorem{thm}{Theorem}
\newtheorem{prop}[thm]{Proposition}
\newtheorem{lem}[thm]{Lemma}
\newtheorem*{lem*}{Lemma}
\newtheorem{conj*}{Conjecture}
\theoremstyle{definition}
\theoremstyle{remark}
\newcommand{\E}{\mathbb{E}}
\DeclarePairedDelimiterX{\inp}[2]{\langle}{\rangle}{#1, #2} 
\newcommand{\no}{\noindent}
    \DeclareMathAlphabet{\mathcalligra}{T1}{calligra}{m}{n}
    \DeclareFontShape{T1}{calligra}{m}{n}{<->s*[2.2]callig15}{}
\renewcommand{\mathbb}{\mathbf}
\begin{document}

\maketitlefull{A note on the exact simulation of a random eigenvalue}{of a GUE matrix}{Luc Devroye$^*$ and Jad Hamdan$^\dag$}{$^*$School of Computer Science, McGill University,\,\,}{$^\dag$Mathematical Institute, University of Oxford}
\medskip

\[
  \vbox{
    \hsize 5.5 true in
    \noindent{\bf Abstract.}\enskip 
        We develop a simple algorithm to generate random variables described by densities equaling squared Hermite functions. As an application, we show how to generate a randomly chosen eigenvalue of a matrix from the Gaussian Unitary Ensemble ({\textsc{gue}}) in sub-linear expected time.
    \smallskip 
    
    \noindent{\bf Keywords.}\enskip Random variate generation, orthogonal polynomials, Hermite functions, rejection method, random matrices, Gaussian unitary ensemble, eigenvalues.
    
  }
\] 

\smallskip

\section{Introduction} \label{sec:intro}
\no 

\textsc{In this note,} we concern ourselves with the generation of a random eigenvalue of a matrix in the Gaussian Unitary Ensemble {\textsc{gue}}$(n)$, described by the Gaussian measure with density
\[
    (2^{n/2}\pi^{n^2/2})^{-1}e^{-(n/2) \, \text{tr} H^2}
\]
on the space of $n\times n$ Hermitian matrices $H=(H_{ij})_{i,j=1}^n$. That is, the diagonal elements are normal with zero mean and unit variance, and the off-diagonal elements are conjugate complex with independent real and imaginary parts that are both zero mean normals with variance $1/2$. For more on {\textsc{gue}} matrices, see \cite{guionnet, deift}.

We operate under the {\sc{ram}} model: real numbers can be stored and operated upon in constant time. We also assume that all the standard operations as well as the exponential, logarithmic, trigonometric, and gamma functions can be computed in constant time. Finally, a source capable of producing an i.i.d.\ sequence of uniform $[0, 1]$ random variables is available.

The problem of computing eigenvalues of a (deterministic) matrix is classical. This yields a natural first group of methods for the random case: one starts from the matrix $H$ and uses iterative numerical methods to compute the set of eigenvalues, noting that 
the Abel-Ruffini theorem (see \cite{abel}, \cite{ruffini}) implies that there is no exact finite-time algorithm that finds roots of polynomials of degree greater than 5.
These methods must take time $\Omega (n^2)$.
If one is satisfied with a given accuracy level, then some of these approximations can be computed in time $O(n^2)$.
Prominent among these numerical methods are the QR algorithm \cite{qr, qr2, qr3, watkins} and its variations, Lanczos' algorithm \cite{lanczos}, the Rayleigh quotient iteration \cite{kress} for Hermitian matrices, or, more generally, power iteration \cite{powerit}. 

The second group of methods uses numerical approximation but starts from a different random matrix $H'$ with the property that the vector of eigenvalues has the same distribution as that of $H$. Following Dumitriu and Edelman \cite{dumitriu+edelman}, we note that one can take $H'$ tridiagonal, with i.i.d.\ normal $(0,1)$ random variables $N_i$ on the diagonal, and with two identical adjacent diagonals filled with independent random variables  $\sqrt{G_1},\ldots, \sqrt{G_{n-1}}$, where $G_i$ is gamma $(i)$. In other words, $H'_{i,i} = N_i$, and $H'_{i,i+1}=H'_{i+1,i}=\sqrt{G_i}$, $1 \le i \le n-1$, and so $H'$ is a random Jacobi matrix.  Generating $H'$ takes linear time. In addition, a divide-and-conquer algorithm \cite{coakley} can compute the spectrum of $H'$ in time $O(n \log n)$ if one only requires the answer up to a fixed precision.

The third group of methods uses stochastic tools to approximate the vector of eigenvalues, based on Markov chain convergence, with rates of convergence dependent upon the mixing times of these Markov chains or Gibbs samplers \cite{krishna, gautier2021}. 

The fourth gaggle of methods relies on the observation that for some integrable random matrix ensembles (of which the \textsc{gue} is an example), the eigenvalues form a so-called determinantal point process (\textsc{dpp}) \cite{Macchi}. This is useful, as the literature on sampling \textsc{dpp}s is vast and includes both approximate \cite{KuleszaTaskar, affandi, LiSraJegelka} and \textit{exact} algorithms \cite{peres, launay2020exact}. 
The main exact algorithm, sometimes referred to as \textsc{hkpv} \cite{peres}, draws the entire vector of eigenvalues of $H$ by sampling from a sequence $\{p_i(x)\}_{i=1}^n$ of appropriately constructed marginals. This procedure has complexity $O(n^3)$ assuming one can sample from each $p_i$ in constant time, but recent surveys \cite{lavancier-, gautier2021, lille} report that these sampling subroutines have a poorly understood complexity. Noting that $p_1$ is precisely the density of a randomly selected eigenvalue of $H$, we still have to find an efficient and exact sampler for this quantity, which we recall is our primary objective.

For $n\leq 4$, one can simply generate $H$ and compute the roots of its characteristic polynomial. Thus, we will describe a method for $n\geq 5$. 
In Section 2, we develop an algorithm to sample a random uniformly selected \textsc{gue}$(n)$ eigenvalue and prove that its expected run-time is sublinear in $n$. We rely on the fact that our problem can be reduced to generating random variables from densities which can be expressed in orthogonal polynomials. We then sample these using the rejection method. Earlier investigations in this direction \cite{baskerville2022, baskerville2022b} used a different proposal density and were strictly empirical, leaving the question of explicit rejection bounds open before this work.  To our knowledge, no other exact algorithm with sublinear expected time is available today. 

\medskip
\subsection{The density of a random GUE eigenvalue}

For a distribution on sets of $n$ points, the $k-$point correlation function $\sigma_k(x_1,...,x_k)$ describes the induced probability distribution on uniformly selected subsets of size $k\leq n$. It is typically normalized so that $\frac{1}{n}\sigma_n(x)$ is the probability density function of a uniformly selected (set of $1$) point. It is well-known  (see \cite{edelman}) that for {\textsc{gue}}$(n)$ eigenvalues, the $k-$point correlation function is given by a $k\times k$ determinant $\det(K(x_1,x_2))_{1\leq i,j\leq k}$, where
\[
    K(x,y) = \frac{1}{\sqrt{2\pi}}\sum_{k=0}^{n-1}\frac{H_k(x)H_k(y)e^{-(x+y)^2/4}}{k!} = \sum_{k=0}^{n-1}\phi_k(x)\phi_k(y).
\]
In the expression above, $H_k$ is the $k$-th \textit{Hermite polynomial}
\[
    H_k(x) = (-1)^ke^{x^2/2}\bigg(\frac{\mathrm{d}}{\mathrm{d}x}\bigg)^ke^{-x^2/2},
\]
and $\phi_k$ is the so-called \textit{Hermite function} $H_k(x)e^{-x^2/4}/\sqrt{k!\sqrt{2\pi}}$ (in the theory of orthogonal functions, $K(x,y)$ is known as the \textit{Christoffel-Darboux} kernel \cite{szego}). The probability density for the distribution of a single, uniformly selected eigenvalue from a {\textsc{gue}}$(n)$ matrix is thus equal to 
\begin{align}\label{density}
   \frac{1}{n}K(x,x)=\frac{1}{n}\sum_{k=0}^{n-1}\phi_k(x)^2.
\end{align}
We remark in passing that, using asymptotic properties of Hermite polynomials (see \cite{mehta}), one recovers Wigner's famous semicircle law \cite{wigner} from this expression in the large--$n$ limit.

Next, we note that the Hermite polynomials are orthogonal. In particular, for any $k\in \mathbb{N}$, we have 
\[
    \int_{-\infty}^\infty H_k(x)^2e^{-x^2/2}\mathrm{d}x = \sqrt{2\pi}\, k!
\]
(see \cite{sansone} for a proof), which shows that $\phi_k^2$ is a density for every $k$. It then follows from (\ref{density}) that if one picks an index $k\in \{0,\dots,n-1\}$ uniformly at random and generates a random variate described by the density $\phi_k^2$, the result would be distributed as a uniformly selected {\textsc{gue}}$(n)$ eigenvalue. The remainder of this paper focuses on generating random variables described by the densities $\phi_k^2$. 

\section{Generating random variates with squared Hermite densities}

\subsection{Notation and preliminaries on Hermite polynomials}

The aforementioned Hermite polynomials can alternatively be defined using the recurrence $H_0=1$, $H_1(x)=x$ and 
\begin{equation}\label{recurrence}
    H_{k+1}(x)=xH_k(x)-kH_{k-1}(x)
\end{equation}
for any $k\geq 2$, giving us a simple $O(k)$ algorithm to compute $H_k$ for fixed $x$ (\cite{szego}).
We also have the following theorem, due to Bonan \& Clarke \cite{bonanclarke}, which gives an upper bound for $\phi_n^2$ on $\mathbb{R}$. Note that we computed the explicit constants in that paper. Other inequalities could also have been used, such as those developed by Foster and Krasikov \cite{foster} and Krasikov \cite{krasikov}.

\begin{thm}\label{upperbound}
    For any $n\in \mathbb{N}$, let $\phi_n^2$ be the square of the $n-$th Hermite function (defined in the previous section). Then
    \begin{align*}
        \sup_{x\in \mathbb{R}} \phi_n^2(x) \leq \frac{8(\pi+1)}{3n^{1/6}} .
    \end{align*}
    Furthermore,
    \begin{align*}
        \phi_n^2(x) \leq \begin{cases}
         \frac{8\pi}{3\sqrt{4n+2-x^2}}, & \text{if $|x|\leq \sqrt{4n+2}$,} \\
        \frac{2\sqrt{2}B^2} {n^{5/6}\,(\sqrt{4n+2}-x)^{4}}, & \text{if $|x|> \sqrt{4n+2}$},
    \end{cases}
    \end{align*}
    where $B=(\pi+1)^2\sqrt{8(\pi+1)/3}$.
\end{thm}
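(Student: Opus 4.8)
All three estimates flow from the second–order equation satisfied by $\phi_n$. Since the probabilist's Hermite polynomial obeys $H_n''-xH_n'+nH_n=0$, substituting $\phi_n(x)=H_n(x)e^{-x^2/4}/\sqrt{n!\sqrt{2\pi}}$ gives
\[
\phi_n''(x)+Q_n(x)\,\phi_n(x)=0,\qquad Q_n(x):=\frac{4n+2-x^2}{4},
\]
so that $\pm\sqrt{4n+2}$ are turning points, $Q_n>0$ on the oscillatory interval and $Q_n<0$ outside it, and $\phi_n^2$ is even (a square of an even or odd function); it therefore suffices to work on $x\ge0$. Two monotone functionals attached to this equation will do most of the work: the \emph{energy} $E(x):=\phi_n'(x)^2+Q_n(x)\phi_n(x)^2$, for which $E'(x)=Q_n'(x)\phi_n(x)^2=-\tfrac x2\phi_n(x)^2\le0$ on $[0,\infty)$ and $E(x)\to0$ at $\infty$, hence $0\le E(x)\le E(0)$ for all $x\ge0$; and the \emph{Sonin function} $F(x):=\phi_n(x)^2+\phi_n'(x)^2/Q_n(x)$ on $(0,\sqrt{4n+2})$, for which $F'(x)=-Q_n'(x)\phi_n'(x)^2/Q_n(x)^2\ge0$, so $F$ is nondecreasing there and $\phi_n^2\le F$.

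For $|x|\le\sqrt{4n+2}$ I would run the monotone–functional (Sonin–type) comparison argument of Bonan and Clarke for $\phi_n''+Q_n\phi_n=0$: combining $\phi_n^2\le F$ with control of a $Q_n$–weighted, WKB–type modification of $F$ through its logarithmic derivative yields $\phi_n(x)^2\le \mathrm{const}\cdot(4n+2-x^2)^{-1/2}$, the constant being the value of that modification at $x=0$. That value is explicit because $H_{2m}(0)=(-1)^m(2m-1)!!$ and $H_{2m+1}'(0)=(-1)^m(2m+1)!!$, so one of $\phi_n(0)^2,\phi_n'(0)^2$ vanishes and the other reduces to a central binomial coefficient, estimated by the Wallis inequality $\binom{2m}{m}4^{-m}\le(\pi m)^{-1/2}$; being deliberately generous one reaches the stated constant $8\pi/3$. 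Near the turning point this bound is vacuous, and that slack will be reclaimed by the uniform bound.

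The uniform bound is deduced from the oscillatory one together with the monotonicity of $F$. Past its last zero $\phi_n$ has constant sign, and there $Q_n>0$ so $|\phi_n|$ is concave, whence $\phi_n$ rises to a single critical point $x_*\in(0,\sqrt{4n+2})$ and decreases monotonically to $0$ thereafter; since $F$ is nondecreasing, $\phi_n(x)^2\le F(x)\le F(x_*)=\phi_n(x_*)^2$ for all $x\le x_*$, and by parity $\sup_{\mathbb R}\phi_n^2=\phi_n(x_*)^2=:S$. Now $\phi_n'(x_*)=0$ gives $S=-2\int_{x_*}^\infty\phi_n\phi_n'$, and applying AM--GM with weight $\sqrt{Q_n(x_*)}$ together with the identity $\int_{x_*}^\infty(\phi_n')^2=\int_{x_*}^\infty Q_n\phi_n^2\le Q_n(x_*)\int_{x_*}^\infty\phi_n^2$ (integrate by parts, then use that $Q_n$ is decreasing on $(x_*,\sqrt{4n+2})$ and negative beyond) collapses this to $S\le2\sqrt{Q_n(x_*)}\int_{x_*}^\infty\phi_n^2$. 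Splitting the tail integral at $\sqrt{4n+2}$, estimating the inner piece by the oscillatory bound (whose antiderivative is $\arcsin$) and the outer piece by the decay bound below, and using the oscillatory bound at $x_*$ itself in the form $\sqrt{Q_n(x_*)}\le\tfrac{4\pi}{3}S^{-1}$, turns this into a cubic inequality of the shape $S^2\bigl(S-c\,n^{-1/6}\bigr)\le c'\,n^{-1/2}$ for explicit $c,c'$, whose positive root is at most $\tfrac{8(\pi+1)}{3}n^{-1/6}$. This self–referential step — and carrying the constants through it without waste — is the part I expect to be the main obstacle; it is where the $n^{-1/6}$ turning–point scale has to be genuinely earned rather than merely read off.

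For $|x|>\sqrt{4n+2}$ the equation reads $\phi_n''=|Q_n|\phi_n$, and here $E(x)\ge0$ (it decreases to $0$) says precisely $\phi_n'(x)^2\ge|Q_n(x)|\phi_n(x)^2$; as $\phi_n$ is positive, decreasing and convex there, this is $(\log\phi_n)'(x)\le-\sqrt{|Q_n(x)|}$, so $\phi_n(x)^2\le\phi_n(\sqrt{4n+2})^2\exp\!\bigl(-2\int_{\sqrt{4n+2}}^x\sqrt{|Q_n|}\bigr)$. Using $|Q_n(t)|\ge\tfrac14\sqrt{4n+2}\,(t-\sqrt{4n+2})$ bounds the exponent below by a constant multiple of $(4n+2)^{1/4}(x-\sqrt{4n+2})^{3/2}$, and converting the exponential to a power via $e^{-u}\le(8/(3e))^{8/3}u^{-8/3}$ (so that $\tfrac32\cdot\tfrac83=4$ produces the exponent $4$), together with the uniform bound at $\sqrt{4n+2}$ and $(4n+2)^{-2/3}\asymp n^{-2/3}$, yields the stated $2\sqrt2\,B^2n^{-5/6}(\sqrt{4n+2}-x)^{-4}$; this last part is routine bookkeeping.
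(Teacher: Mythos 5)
The paper does not actually prove Theorem~\ref{upperbound}: it is stated as ``due to Bonan \& Clarke \cite{bonanclarke}'' and the authors add only that they ``computed the explicit constants.'' There is therefore no ``paper's own proof'' to compare against, and the right benchmark is the cited reference. With that caveat, your reconstruction is in the correct lineage. The ODE $\phi_n''+Q_n\phi_n=0$ with $Q_n=(4n+2-x^2)/4$ is right; the energy $E=\phi_n'^2+Q_n\phi_n^2$ indeed satisfies $E'=Q_n'\phi_n^2\le0$ on $[0,\infty)$ with $E(\infty)=0$, so $E\ge0$ there; the Sonin function $F=\phi_n^2+\phi_n'^2/Q_n$ has $F'=-Q_n'\phi_n'^2/Q_n^2\ge0$; the Chebyshev/AM--GM chain giving $S\le2\sqrt{Q_n(x_*)}\int_{x_*}^\infty\phi_n^2$ (after $\int_{x_*}^\infty(\phi_n')^2=\int_{x_*}^\infty Q_n\phi_n^2\le Q_n(x_*)\int_{x_*}^\infty\phi_n^2$) is valid; and the decay-region bookkeeping via $e^{-u}\le(8/(3e))^{8/3}u^{-8/3}$ is sound and in fact gives a numerically much smaller constant than the stated $2\sqrt2\,B^2$.

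The two places you flag as obstacles are genuine gaps, not cosmetic ones. \textbf{(a)} For the oscillatory bound you invoke ``a $Q_n$-weighted, WKB-type modification of $F$,'' but the obvious candidates do not work as stated: $Q_nF$ is just the energy $E$, which only yields $\phi_n^2\le E(0)/Q_n$ (a full power of $Q_n$, off by $n^{1/2}$ near the turning point), while the $s$-coordinate Liouville/Gronwall argument for $\psi=Q_n^{1/4}\phi_n$ has $\int|r|\sqrt{Q_n}\,dx$ divergent at the turning point, so the $(4n+2-x^2)^{-1/2}$ bound with a clean $x=0$ constant cannot be read off that way. Some more careful functional (as in Bonan--Clarke, or Krasikov's later sharpenings) is needed, and it is precisely where the $8\pi/3$ comes from; as written your argument does not produce it. \textbf{(b)} The cubic inequality $S^2(S-cn^{-1/6})\le c'n^{-1/2}$ requires, inside the $\int_{\sqrt{4n+2}}^\infty\phi_n^2$ term, a version of the decay bound that is itself parametrized by $S$ (via $\phi_n(\sqrt{4n+2})^2\le S$), and the inner piece $\int_{x_*}^{\sqrt{4n+2}}$ needs $\sqrt{4n+2}-x_*\lesssim S^{-2}n^{-1/2}$ from $Q_n(x_*)\le(4\pi/3S)^2$; these steps are correct in outline, but the constants $c,c'$ have to be pinned down before you can claim the positive root is $\le\tfrac{8(\pi+1)}{3}n^{-1/6}$. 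You acknowledge both points honestly; I just want to underline that they are not ``routine'' and are exactly where the content of Bonan--Clarke lives.
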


If we let 
\[
    x_1=x_1(n)=\sqrt{4n+2-\frac{\pi^2}{(\pi+1)^2}\,n^{1/3}}
\]
and
\[
    x_2=x_2(n)=\sqrt{4n+2}+\sqrt{B}\bigg(\frac{3}{2\sqrt{2}(\pi+1)}\bigg)^{1/4}n^{-1/6},
\]
it follows directly from the theorem above that the function 
\begin{align*}
    h_n(x) \stackrel{\text{def}}{=} \begin{cases}
       \frac{8\pi}{3\sqrt{4n+2-x^2}}, & \text{if $|x|\leq x_1$,} \\
        \frac{8(\pi+1)}{3 n^{1/6}} ,       & \text{if $|x|\in (x_1, x_2]$,} \\
        \frac{2\sqrt{2}B^2} {n^{5/6}\,(\sqrt{4n+2}-x)^{4}},   & \text{if $|x|> x_2$,}
    \end{cases}
\end{align*}
dominates $\phi_n^2$ on all of $\mathbb{R}$. 

The following lemma will be useful for analyzing our algorithm's expected runtime.

\begin{lem}\label{bonanintegral} Let $h_n$ be defined as above. Then
    \[
        \int_\mathbb{R} h_n(x) \,\mathrm{d}x = O(1), \quad \int_{x_1}^{\infty} h_n(x) \, \mathrm{d}x = O(n^{-1/3}).
    \]
\end{lem}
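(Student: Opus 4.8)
The plan is to use the evenness of $h_n$ to reduce both integrals to integrals over $[0,\infty)$, and then to treat the three pieces of $h_n$ separately. Write $a_n := \sqrt{4n+2}$ for brevity. The useful preliminary facts about the breakpoints are that $x_2 - a_n$ is exactly a constant times $n^{-1/6}$ (immediate from the definition of $x_2$), and that $a_n - x_1 = O(n^{-1/6})$; the latter I would get by rationalizing, $a_n - x_1 = (a_n^2 - x_1^2)/(a_n + x_1) = \bigl(\tfrac{\pi^2}{(\pi+1)^2}n^{1/3}\bigr)/(a_n + x_1)$, and bounding the denominator below by $a_n = \Theta(n^{1/2})$. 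These give $x_2 - x_1 = (x_2 - a_n) + (a_n - x_1) = O(n^{-1/6})$.

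For the innermost piece I would substitute $x = a_n \sin\theta$, so that $4n+2 - x^2 = a_n^2\cos^2\theta$ and $dx = a_n\cos\theta\,d\theta$, whereupon the integrand $\tfrac{8\pi}{3}(4n+2-x^2)^{-1/2}$ pulls back to the constant $\tfrac{8\pi}{3}$. Hence $\int_0^{x_1}\tfrac{8\pi}{3}(4n+2-x^2)^{-1/2}\,dx = \tfrac{8\pi}{3}\arcsin(x_1/a_n) \le \tfrac{8\pi}{3}\cdot\tfrac{\pi}{2} = \tfrac{4\pi^2}{3}$, a constant independent of $n$. This is the term responsible for the $O(1)$ bound, and it is irrelevant to the second estimate since its domain of integration is $[0,x_1]$.

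For the middle piece, $\int_{x_1}^{x_2}\tfrac{8(\pi+1)}{3}n^{-1/6}\,dx = \tfrac{8(\pi+1)}{3}n^{-1/6}(x_2 - x_1) = O(n^{-1/6})\cdot O(n^{-1/6}) = O(n^{-1/3})$ by the preliminary estimate on $x_2 - x_1$. For the tail piece, since $x > x_2 > a_n$ implies $(\sqrt{4n+2}-x)^{-4} = (x-a_n)^{-4}$, the substitution $u = x - a_n$ yields $\int_{x_2}^{\infty} 2\sqrt2\,B^2 n^{-5/6}(x-a_n)^{-4}\,dx = \tfrac{2\sqrt2\,B^2}{3}n^{-5/6}(x_2 - a_n)^{-3}$, and since $x_2 - a_n$ is proportional to $n^{-1/6}$ this equals $\Theta(n^{-5/6}\cdot n^{1/2}) = O(n^{-1/3})$. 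Adding the three pieces and doubling gives $\int_\mathbb{R} h_n = O(1)$; adding just the middle and tail contributions (the only pieces meeting $[x_1,\infty)$) gives $\int_{x_1}^{\infty} h_n = O(n^{-1/3})$.

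I do not expect a genuine obstacle here: once the substitution $x = a_n\sin\theta$ is spotted, the rest is bookkeeping. The one place to be slightly careful is the estimate $a_n - x_1 = O(n^{-1/6})$, where rationalizing the difference of square roots (rather than Taylor-expanding and checking that $n^{1/3}/(4n+2)$ is small) is the cleanest route and works uniformly in $n$.
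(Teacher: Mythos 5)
Your proof is correct and follows essentially the same route as the paper's: split $h_n$ into its three pieces, observe that the first contributes a bounded $\arcsin$ term, and use $|x_2-x_1|=O(n^{-1/6})$ together with the explicit $n^{-1/3}$ tail to control the remaining two. The paper simply writes down the three antiderivatives without your intermediate justifications (the $x=a_n\sin\theta$ substitution and the rationalization giving $a_n-x_1=O(n^{-1/6})$), but the substance is identical.
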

\begin{proof}
    Let $n\in \mathbb{N}$ be arbitrary, then
    \begin{align*}
        &\int_\mathbb{R}h_n \\
        &= \int_{0}^{x_1} \frac{8\pi}{3}\frac{1}{\sqrt{4n+2-x^2}}\mathrm{d}x + \int_{x_1}^{x_2} \frac{8(\pi+1)}{3n^{1/6}} \,\mathrm{d}x + \int_{x_2}^\infty \frac{2\sqrt{2}B^2n^{-5/6}}{(\sqrt{4n+2}-x)^4}\mathrm{d}x \\
        &= \frac{8\pi}{3}\arcsin{\frac{x_1}{\sqrt{4n+2}}} + \frac{8(\pi+1)}{3n^{1/6}}|x_2-x_1|+ \frac{\sqrt{B}\,2\sqrt{2}}{3n^{1/3}}\bigg(\frac{2\sqrt{2}(\pi+1)}{3}\bigg)^{3/4}
    \end{align*}
    and the lemma follows from the fact that $|x_2-x_1|=O(n^{-1/6})$.
\end{proof}
We will also need the following representation of Hermite polynomials due to van Veen \cite{vanveen}.
\begin{thm} \label{vv}
    For any $n, k\in \mathbb{N}$, $x\in [0,2\sqrt{n+1}]$, 
    \begin{align*}
        H_n(x)&= A_n(x)\big(B_n(x)+\mu R_n(x)\big)
    \end{align*}
    where 
    \begin{align*}
        &A_n(x) \overset{\mathrm{def}}{=} \frac{\Gamma(n+1)}{\pi}\frac{e^{{(n+1)}/2+x^2/4}}{(n+1)^{n/2}}, \\
        &B_n(x) \stackrel{\mathrm{def}}{=}  \frac{\sqrt{\pi}}{\left\{(n+1)\sin(\alpha)\right\}^{1/2}}*\sin\left\{\frac{(n+1)}{2}\big(\sin(2\alpha)-2\alpha\big)+\frac{\alpha}{2}+\frac{3\pi}{4}\Big)\right\},\\
        & R_n(x) \overset{\mathrm{def}}{=} 
        \frac{1}{3(n+1)\sin(\alpha)^2},\\
        &\mu\overset{\mathrm{def}}{=} 3.1+1.1*(\sin(\alpha/2+3*\pi/4)/\sin(\alpha))^2,
    \end{align*}
 and
\[
\alpha = \alpha(x) = \arccos{\frac{x}{2\sqrt{n+1}}}.
\]

\end{thm}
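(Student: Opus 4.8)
The plan is to derive the representation from the Cauchy contour--integral form of $H_n$ together with the method of steepest descents: van Veen's formula is exactly the leading term of that expansion, with $\mu R_n$ an explicit bound on the first correction and on everything beyond it. Starting from the generating function $e^{xz-z^2/2}=\sum_{k\ge0}H_k(x)z^k/k!$, one has, for any radius $\rho>0$,
\[
  H_n(x)\;=\;\frac{n!}{2\pi i}\oint_{|z|=\rho}\exp\!\big(xz-\tfrac{z^2}{2}-(n+1)\log z\big)\,dz\;=\;\frac{n!}{2\pi i}\oint_{|z|=\rho}e^{\phi(z)}\,dz,
\]
with $\phi(z)=xz-\tfrac{z^2}{2}-(n+1)\log z$. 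The saddle equation $\phi'(z)=x-z-(n+1)/z=0$, i.e.\ $z^2-xz+(n+1)=0$, has for $x\in[0,2\sqrt{n+1})$ the complex--conjugate roots $z_\pm=\sqrt{n+1}\,e^{\pm i\alpha}$ with $\alpha=\arccos\!\big(x/(2\sqrt{n+1})\big)\in(0,\tfrac{\pi}{2}]$, which coalesce at the turning point $x=2\sqrt{n+1}$. Choosing $\rho=\sqrt{n+1}$, the circle of integration already passes through both saddles; deforming it so that it follows the steepest--descent direction near each (legitimate, since the integrand is entire apart from the pole at $0$) turns the estimate into a standard Laplace--type computation, and since $H_n(x)$ is real while the two saddle contributions are complex conjugates, $H_n(x)=2\,\mathrm{Re}$ of the $z_+$ contribution.

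The next step is to evaluate that contribution. A direct computation gives $\mathrm{Re}\,\phi(z_+)=\tfrac{n+1}{2}+\tfrac{x^2}{4}-\tfrac{n+1}{2}\log(n+1)$ and $\mathrm{Im}\,\phi(z_+)=\tfrac{n+1}{2}(\sin 2\alpha-2\alpha)$, while $\phi''(z_+)=-1+(n+1)/z_+^2=e^{-2i\alpha}-1$, so $-\phi''(z_+)=2\sin\alpha\,e^{i(\pi/2-\alpha)}$. Feeding these into the leading steepest--descent term $\tfrac{n!}{2\pi i}\,e^{\phi(z_+)}\sqrt{2\pi/(-\phi''(z_+))}$, taking twice the real part, and keeping careful track of the branch of the square root, of the factor $1/(2\pi i)$, and of the orientation of the descent path, the modulus collects to $A_n(x)\cdot\sqrt{\pi}\,((n+1)\sin\alpha)^{-1/2}$ — here one uses $\Gamma(n+1)=n!$ and $e^{\mathrm{Re}\,\phi(z_+)}=e^{(n+1)/2+x^2/4}(n+1)^{-(n+1)/2}$ to recognise $A_n(x)=\tfrac{\Gamma(n+1)}{\pi}\,e^{(n+1)/2+x^2/4}(n+1)^{-n/2}$ — and the argument of the oscillation collects to $\tfrac{n+1}{2}(\sin 2\alpha-2\alpha)+\tfrac{\alpha}{2}+\tfrac{3\pi}{4}$. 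That is precisely $A_n(x)B_n(x)$.

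The remainder is the one genuinely delicate point, and where I expect the real work to lie: one must show that $H_n(x)-A_n(x)B_n(x)=\mu\,A_n(x)R_n(x)$ for some $\mu$ with $|\mu|\le 4.2$, where $R_n(x)=\tfrac{1}{3(n+1)\sin^2\alpha}$. For this I would invoke a quantitative version of the method of steepest descents — an a priori bound controlling the \emph{entire} remainder of the Laplace expansion by a single explicit expression (as in Olver's treatment), or, equivalently, Taylor--expand $\phi$ about $z_+$ one order further along the descent contour and bound the tail by a Volterra/integral--equation estimate. The size of the first correction is governed by $\phi'''(z)=-2(n+1)/z^3$ and $\phi^{(4)}(z)=6(n+1)/z^4$ measured against $|\phi''(z_+)|=2\sin\alpha$: the $\phi^{(4)}/(\phi'')^2$ contribution is already of order $\tfrac{1}{(n+1)\sin^2\alpha}$, and the term $(\phi''')^2/(\phi'')^3$, of naive order $\sin^{-3}\alpha$, in fact collapses to the same order once one uses $\sin 3\alpha=3\sin\alpha-4\sin^3\alpha$ (its imaginary part, incidentally, being exactly the source of the sub--leading phase $\tfrac{\alpha}{2}$ already folded into $B_n$). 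So the form of $R_n$ is essentially forced; the content is to carry the explicit constants through the whole remainder and verify that they never exceed $4.2$. Two remarks make this manageable: where $\sin\alpha$ is very small the right--hand side $A_nR_n$ blows up and the bound is vacuous, so uniformity only needs to be controlled for $\sin\alpha$ bounded below; and the finitely many small values of $n$ can be checked directly. A completely parallel route replaces the contour integral by the Liouville--Green analysis of the second--order equation $u''+(n+\tfrac12-\tfrac{x^2}{4})u=0$ satisfied by $u=e^{-x^2/4}H_n$, in which $R_n$ reappears as a bound on the Liouville--Green error--control function and $A_n$, $B_n$ are pinned down by matching amplitude and phase to a known value of $H_n$.
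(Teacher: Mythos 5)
A point of orientation first: the paper does not prove this statement. Theorem~\ref{vv} is quoted as a black box from van Veen's paper \cite{vanveen}, so there is no in-paper proof for your argument to be compared against; what you have produced is a sketch for a result the authors merely import.

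Your leading-order steepest-descent computation is correct and is indeed the route van Veen takes. The contour form $H_n(x)=\tfrac{n!}{2\pi i}\oint e^{xz-z^2/2}z^{-(n+1)}\,dz$, the saddles $z_\pm=\sqrt{n+1}\,e^{\pm i\alpha}$ from $z^2-xz+(n+1)=0$, the values $\mathrm{Re}\,\phi(z_+)=\tfrac{n+1}{2}+\tfrac{x^2}{4}-\tfrac{n+1}{2}\log(n+1)$, $\mathrm{Im}\,\phi(z_+)=\tfrac{n+1}{2}(\sin 2\alpha-2\alpha)$, and $|\phi''(z_+)|=2\sin\alpha$ all check out, and assembling $\tfrac{n!}{2\pi i}e^{\phi(z_+)}\sqrt{2\pi/(-\phi''(z_+))}$ with its conjugate reproduces $A_n B_n$ in modulus and, up to a sign resolved by the orientation of the descent path, in phase. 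The genuine gap --- which you yourself flag --- is that none of this proves the actual content of the theorem: the \emph{explicit, uniform} bound $H_n=A_n(B_n+\mu R_n)$ with $|\mu|\le 4.2$ and $R_n=\tfrac{1}{3(n+1)\sin^2\alpha}$. Observing that $\phi^{(4)}/(\phi'')^2$ has order $\tfrac{1}{(n+1)\sin^2\alpha}$ and that the real part of $(\phi''')^2/(\phi'')^3$ collapses to the same order via $\sin 3\alpha=3\sin\alpha-4\sin^3\alpha$ tells you that the \emph{form} of $R_n$ is natural, but it controls only the first correction, not the entire remainder of the expansion, and it says nothing about the numerical constant $4.2$. ``Carry the explicit constants through'' and ``invoke Olver's error-control function'' name the work rather than doing it; producing a constant valid uniformly over $0\le x\le 2\sqrt{n+1}$ and all $n$ is precisely where the substance of van Veen's theorem lies. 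As written, the proposal establishes the asymptotic approximation $H_n\sim A_nB_n$ and identifies the scale of its error, but it does not establish the theorem.
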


\begin{figure}
    \centering
    \includegraphics[width=0.6\textwidth]{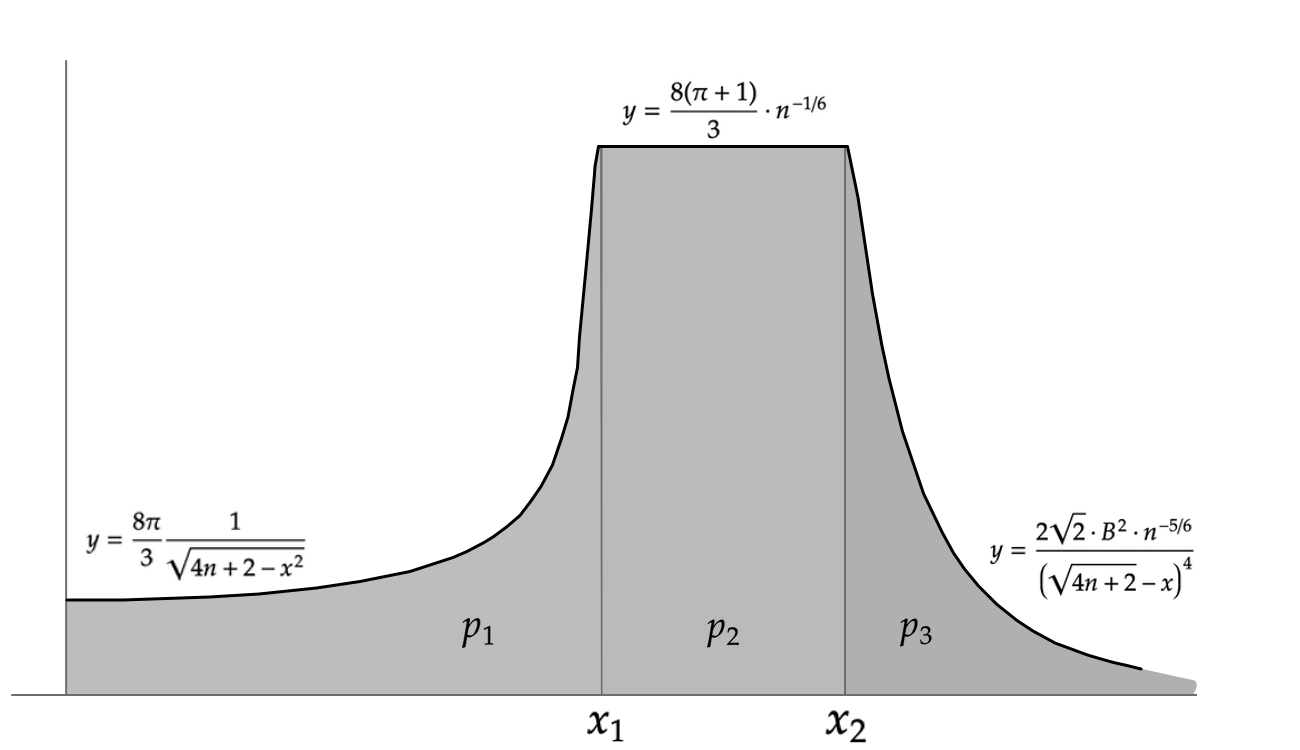}
    \caption{The function $h_n$, defined piecewise. As in Algorithm 1 (section \ref{bonangen}), $p_1$, $p_2$ and $p_3$ denote the area under the curve in $[0,x_1)$, $[x_1,x_2)$ and $[x_2,\infty)$ respectively. }
\end{figure}

This representation is then extended to $|x|\leq 2\sqrt{n+1}$ using the fact that $H_n$ is even. On said domain, for any $n, k\in \mathbb{N}$, we use $V_n\stackrel{\text{def}}{=}A_n*B_n$ (as defined in the theorem above) to approximate $H_n$ and define the following approximation to $\phi_n^2$:
\[
    f_n(x) \stackrel{\text{def}}{=} \frac{V_n(x)^2e^{-x^2/2}}{{\sqrt{2\pi}\,n!}} 
\]
(we define $f_n=0$ if $|x|>2\sqrt{n+1}$). Note that for any $|x|<2\sqrt{n+1}$,
\begin{align*}
    |\phi_n^2(x)-f_n(x)| = \bigg(\big(\mu R_n(x)\big)^2+2\mu R_n(x)B_n(x)\bigg)\frac{A_n^2(x)e^{-x^2/2}}{\sqrt{2\pi} \, n!}.
\end{align*}
\begin{figure}
    \centering
    \includegraphics[width=0.6\textwidth]{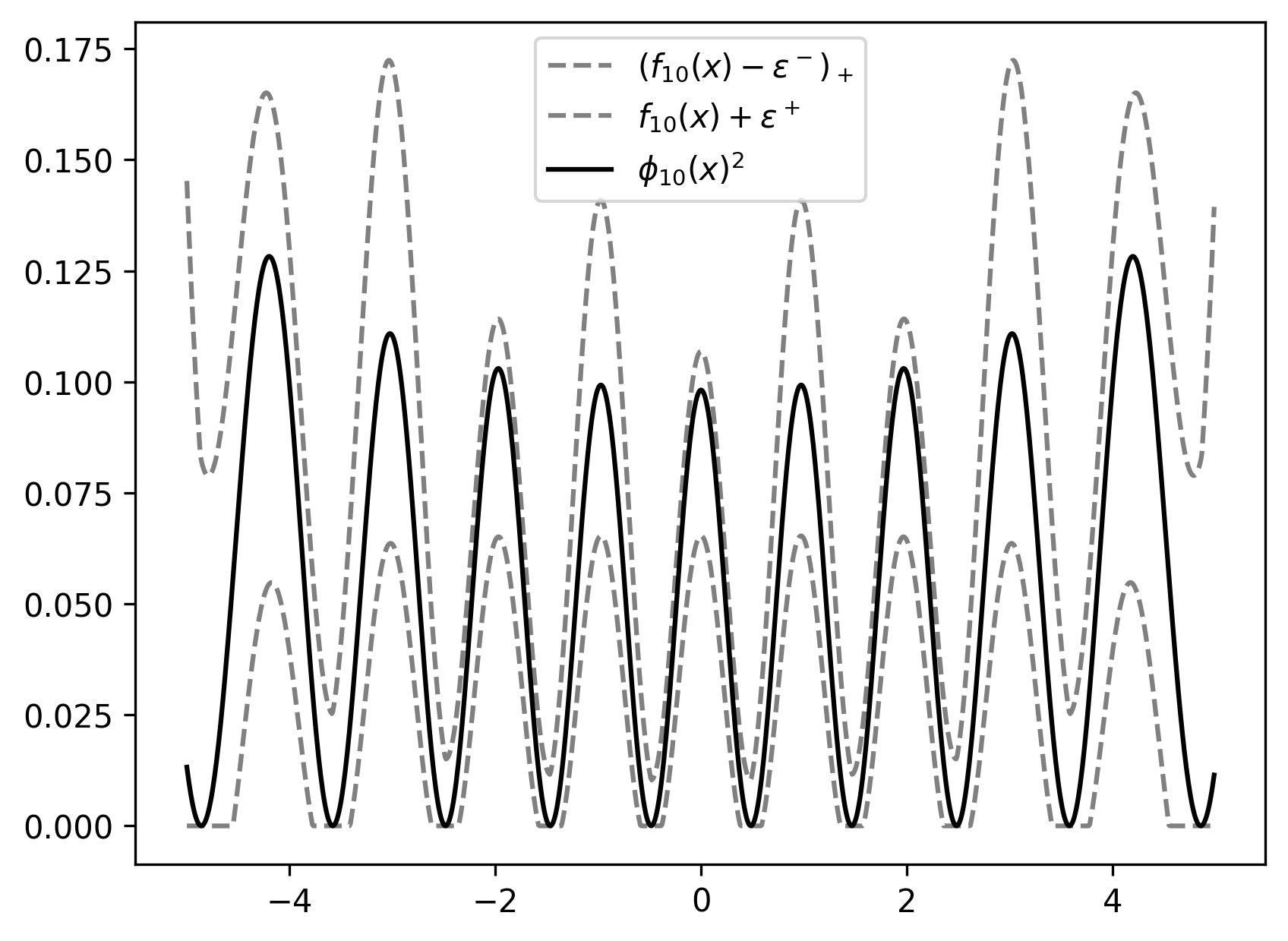}
    \caption{The function $\phi_{10}^2$ (bold) bounded from below and above by $(f_{10}-\epsilon^{-})_{+}$ and $(f_{10}+\epsilon^{+})$  (dashed).}
    \label{fig:my_label}
\end{figure}
Using the fact that $|\mu|<4.2$, it follows that
\begin{align*}
    (\phi_n^2(x)-f_n(x))_+ \,\leq \,&\epsilon^{+}(x)\stackrel{\text{def}}{=}\frac{A_n^2(x)e^{-x^2/2}}{\sqrt{2\pi} \, n!} \big((8.4)(B_n(x))_+R_n(x)+(4.2)^2R_n(x)^2\big) \\
    (\phi_n^2(x)-f_n(x))_- \,\leq \,&\epsilon^{-}(x)\stackrel{\text{def}}{=} \frac{A_n^2(x)e^{-x^2/2}}{\sqrt{2\pi} \, n!} (8.4)\bigg(\Big(B_n(x)R_n(x)\Big)_+\lor\Big(B_n(x)R_n(x)\Big)_{-}\bigg) 
\end{align*}
(we used $a\wedge b$ (resp. $a\lor b$) to denote $\min(a,b)$ (resp. $\max(a,b)$) and $(x)_{+}=(x\lor 0)$, $(x)_{-}=(-x\lor 0)$).
Note that $\epsilon^{+/-}$ are positive by definition and
\begin{equation}\label{upperlower}
        \left(f_n-\epsilon^{-}\right)_{+} \leq \phi_n^2 \leq \left(f_n +\epsilon^{+}\right) \wedge h_n
\end{equation}
for any $x\in\mathbb{R}$. Lastly, we define $\Delta_\epsilon(x)$ to be the gap between the upper and lower bound for $\phi_n^2(x)$ and note that there exists a constant $C$ for which
\begin{align*}
\Delta_\epsilon(x) &\leq C\bigg(\big|R_n(x)|^2+ |R_n(x)B_n(x)|\bigg)\frac{A_n^2(x)e^{-x^2/2}}{\sqrt{2\pi} \, n!}
\end{align*}
and the following proposition holds.
\begin{prop}\label{vverror}
    For large enough $n$, 
    \[
        \int_{0}^{x_1}\Delta_\epsilon(x) \,\mathrm{d}x=O\big(n^{-1/3}\big).
    \]
\end{prop}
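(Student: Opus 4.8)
The plan is to strip off the common prefactor, change variables to the angle $\alpha$, and reduce to an elementary one-dimensional integral. The first and only slightly delicate observation is that $A_n(x)^2 e^{-x^2/2}$ does not depend on $x$: squaring $A_n$ produces $e^{(n+1)+x^2/2}$, whose $e^{x^2/2}$ is killed exactly by $e^{-x^2/2}$, leaving
\[
\frac{A_n(x)^2 e^{-x^2/2}}{\sqrt{2\pi}\,n!} = \frac{n!\,e^{n+1}}{\sqrt{2\pi}\,\pi^2 (n+1)^n} =: c_n .
\]
Stirling's formula gives $c_n \sim \sqrt{n}/\pi^2$, and in particular $c_n \le C_1\sqrt{n}$ for large $n$. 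Combined with the displayed bound on $\Delta_\epsilon$ just before the proposition, this yields $\Delta_\epsilon(x) \le C_2\sqrt{n}\bigl(|R_n(x)|^2 + |R_n(x)B_n(x)|\bigr)$ on $[0,x_1]$.

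Next I would bound the two van Veen factors using $|\sin\{\cdots\}|\le 1$: with $\alpha = \alpha(x) = \arccos(x/2\sqrt{n+1})$ we have $R_n(x) = \{3(n+1)\sin^2\alpha\}^{-1}$ exactly and $|B_n(x)| \le \sqrt{\pi}\,\{(n+1)\sin\alpha\}^{-1/2}$. Substituting $x = 2\sqrt{n+1}\cos\alpha$, so that $dx = -2\sqrt{n+1}\sin\alpha\,d\alpha$, the range $x\in[0,x_1]$ becomes $\alpha\in[\alpha_1,\pi/2]$, and the definition of $x_1$ gives
\[
\sin^2\alpha_1 = \frac{4(n+1)-x_1^2}{4(n+1)} = \frac{1}{4(n+1)}\Bigl(2 + \tfrac{\pi^2}{(\pi+1)^2}\,n^{1/3}\Bigr),
\]
so that $\sin\alpha_1 = \Theta(n^{-1/3})$; indeed $x_1$ was chosen precisely so that $R_n(x_1) = \Theta(n^{-1/3})$. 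Since $\sin$ is increasing on $[0,\pi/2]$ there are no other singularities in the range. Carrying out the substitution and collecting powers of $n$ (using $\sqrt{n}\,\sqrt{n+1}/(n+1)^2 = \Theta(n^{-1})$ for the $|R_n|^2$ term and $\sqrt{n}\,\sqrt{n+1}/(n+1)^{3/2} = \Theta(n^{-1/2})$ for the $|R_nB_n|$ term) reduces the claim to
\[
\frac{C_3}{n}\int_{\alpha_1}^{\pi/2}\frac{d\alpha}{\sin^3\alpha} + \frac{C_4}{\sqrt{n}}\int_{\alpha_1}^{\pi/2}\frac{d\alpha}{\sin^{3/2}\alpha} = O(n^{-1/3}).
\]

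To finish, I would estimate the two integrals near their only singular point $\alpha=0$ via $\sin\alpha \ge (2/\pi)\alpha$ on $[0,\pi/2]$: then $\int_{\alpha_1}^{\pi/2}\csc^3\alpha\,d\alpha \le (\pi/2)^3\int_{\alpha_1}^{\pi/2}\alpha^{-3}\,d\alpha \le \tfrac12(\pi/2)^3\alpha_1^{-2} = O(\sin^{-2}\alpha_1) = O(n^{2/3})$, and similarly $\int_{\alpha_1}^{\pi/2}\csc^{3/2}\alpha\,d\alpha = O(\alpha_1^{-1/2}) = O(\sin^{-1/2}\alpha_1) = O(n^{1/6})$. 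Substituting back, the first term is $O(n^{2/3}/n) = O(n^{-1/3})$ and the second is $O(n^{1/6}/\sqrt{n}) = O(n^{-1/3})$, so the sum is $O(n^{-1/3})$, proving the proposition. The main obstacle is conceptual rather than computational: recognizing that the $x$-dependence of $A_n^2 e^{-x^2/2}$ cancels (so the prefactor is a clean $\Theta(\sqrt n)$) and that the cutoff $x_1$ sits exactly at the scale where $\sin\alpha_1 \asymp n^{-1/3}$; once those are in place the rest is a routine substitution and a power-function integral.
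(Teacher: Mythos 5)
Your proof is correct. The approach is essentially the same as the paper's, but streamlined: the paper first splits $[0,x_1]$ at $\sqrt{3(n+1)}$, disposes of the bulk region (where $\sin\alpha>1/2$ trivially gives $B_n=O(n^{-1/2})$, $R_n=O(n^{-1})$) and then invokes a separate technical lemma for the edge strip $[\sqrt{3(n+1)},x_1]$, while you carry out the $\alpha$-substitution once over the whole range and bound both integrals directly. Both arguments hinge on the same two ideas — the Jacobian $dx=-2\sqrt{n+1}\sin\alpha\,d\alpha$ knocking one power off the $\sin^{-\beta}\alpha$ singularity, and the comparison $\sin\alpha\geq(2/\pi)\alpha$ — and both land on the exponents $n^{-1}\cdot n^{2/3}$ and $n^{-1/2}\cdot n^{1/6}$. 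Your observation that $A_n(x)^2e^{-x^2/2}$ is actually constant in $x$ (so that the prefactor is a clean $c_n\sim\sqrt n/\pi^2$ rather than merely an $O(\sqrt n)$ bound from Stirling) is a nice clarification that the paper leaves implicit. One small thing worth making explicit if you write this up formally: you bound $\alpha_1^{-2}=O(\sin^{-2}\alpha_1)$, which uses $\sin\alpha_1\le\alpha_1$; that is of course true, but it is the companion inequality to the $\sin\alpha\geq(2/\pi)\alpha$ you state, and you should cite it rather than let it pass silently.
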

\begin{proof}
    See the appendix.
\end{proof}

\subsection{Generating random variates with density $h_n/\int h_n$} \label{bonangen}


Our main algorithm in the following section requires the generation of random variates with density $h_n/\int h_n$. This can be done in constant expected time using the following algorithm, which is a straightforward application of the inversion method (see \cite{rng} for an exposition of this method). 

\begin{lstlisting}
Algorithm 1:
    Set $B \leftarrow (\pi+1)^2\sqrt{8(\pi+1)/3}$.
    Set $p_1 \leftarrow \int_0^{x_1}h_n=\frac{8\pi}{3}\arcsin{\frac{x_1}{\sqrt{4n+2}}}$.
    Set $p_2 \leftarrow \int_{x_1}^{x_2}h_n=\frac{8(\pi+1)}{3}n^{-1/6}\big|x_2-x_1\big|$.
    Set $p_3 \leftarrow \int_{x_2}^\infty h_n=\sqrt{B}\cdot\left(\frac{2\sqrt{2}}{3}\right)^{7/4}(\pi+1)^{3/4}n^{-1/3}$.
    Generate $S$, where $\P\{S=1\}=\P\{S=-1\}=1/2$.
    Generate $U$ uniformly on $[0,1]$.
    If      $U< p_1/(p_1+p_2+p_3)$ then 
            Generate $V$ uniformly on $[0,1]$.
            $X \leftarrow \sqrt{4n+2} \,\,\big|\sin(V*\arcsin(x_1/\sqrt{4n+1}))\big|$
    else if $p_1/(p_1+p_2+p_3)\leq U<(p_1+p_2)/(p_1+p_2+p_3)$ then
            Generate $V$ uniformly on $[0,1]$.
            $X\leftarrow x_1+(x_2-x_1)*V$
    else 
            Generate $V$ uniformly on $[0,1]$.
            $X\leftarrow \sqrt{4n+2}\,+\,{\sqrt{B}\bigg(\frac{3}{2\sqrt{2}(\pi+1)}\bigg)^{1/4}n^{-1/6}}*{V^{-1/3}}$
    return $X*S$
\end{lstlisting}


\subsection{A first eigenvalue algorithm that is linear in $n$}\label{bonanalg}

For any $n$, we now know how to generate from $h_n/\int h_n$ and that $h_n$ dominates $\phi_n^2$. The following rejection algorithm (see \cite{rng} and \cite{vonneumann}) will be used as a stepping stone towards our main algorithm, and will be shown to run in linear time in the following section.
\begin{lstlisting}
Algorithm 2:
    Repeat until Accept
        Generate $X$ from $h_n/\int h_n$
        Generate $U$ uniformly on $[0,1]$
        Accept $\leftarrow \big[U*h_n(X)\leq \phi_n^2(X)\big]$         
    return $X$
\end{lstlisting}
Note that the comparison requires us to compute $\phi_n^2(X)$, which we do using the recurrence relation for Hermite polynomials (\ref{recurrence}) given earlier.
\subsection{Main algorithm and runtime analysis}\label{bonanvvalg}

We can refine this second algorithm using van Veen's estimate for $H_n$ stated in Theorem \ref{vv}. Let $\phi_n^2$, $f_n$, $h_n$, and $\epsilon^{+/-}$ be defined as above.

\begin{lstlisting}
Algorithm 3:
    Repeat forever
        Generate $X$ from $h_n/\int h_n$
        Generate $U$ uniformly on $[0,1]$
        If      $U*h_n(X) \leq \left(f_n(X)-\epsilon^{-}(X)\right)_{+}$ then return X
        else if $U*h_n(X) \leq \left(f_n(X)+\epsilon^{+}(X)\right)$ then
                if $U*h_n(X)\leq \phi^2_n(X)$             then return X
\end{lstlisting}
We emphasize that $f_n, h_n$ and $\epsilon^{+/-}$ can be computed in $O(1)$ time, and note that in the last line, $\phi_n^2$ is computed using the recurrence formula given above, as in algorithm 2.

Fix $n\in \mathbb{N}$, and let $N$ be the number of iterations of algorithm 3 when generating from $\phi_n^2$. Let $T_i$ be the time taken by the $i$-th iteration, where $1\leq i\leq N$, then the total runtime of the algorithm is $\sum_{i=1}^N T_i$. Since $N$ is a stopping time, applying Wald's identity (see \cite{williams}) yields
\begin{align*}
    \E\bigg\{\sum_{i=1}^NT_i\bigg\}=\E\{N\}\E\{T_1\}.
\end{align*}
We have
\[
    \E\{N\} = \int h_n = O(1),
\]
and using the fact that $\phi_n^2$ can be computed in $O(n)$ time with the recurrence for $H_n$ explained above, and that one can sample from $h_n/\int h_n$ in constant time using algorithm 2 (cf. \ref{bonangen}), we conclude that
\begin{align*}
    \E\{T_1\}= O\Bigg(n\cdot\bigg(\int_{0}^{x_1}\Delta_\epsilon(x)\,\mathrm{d}x+\int_{x_1}^{\infty}h_n(x)\,\mathrm{d}x\bigg)\Bigg).
\end{align*}

  It then follows from lemmas \ref{vverror} and \ref{bonanintegral} that $\E\{T_1\}=O(n^{2/3})$, and, in turn, that this refined algorithm is sublinear with an expected runtime of the same order.
On the other hand, replacing $f_n$ with $0$ and $\epsilon^+$ with $\phi_n^2$ reduces the above algorithm to algorithm 2 and shows that the latter runs in linear expected time.

\section{Appendix: Proof of Proposition \ref{vverror}.}
We denote $f(x)=O(g(x))$ by $f\ll g$. 
    It suffices to show that
  \[
        I(0,x_1)=\int_0^{x_1}\bigg( |R_n(x)B_n(x)|+R_n(x)^2\bigg)\frac{A_n^2(x)e^{-x^2/2}}{\sqrt{2\pi} \, n!}\, dx
  \]
  is of the desired order, since $\Delta_\epsilon \ll I(0,x_1)$.
We begin with an application of Stirling's approximation, which yields
\[
    \frac{A_n(x)^2e^{-x^2/2}}{\sqrt{2\pi}n!} \ll {\sqrt{n}}.
\]
If $|x|\leq \sqrt{3(n+1)}$, we have $\sin\alpha>1/2$, $B_n(x)=O(n^{-1/2})$ and $R_n(x)=O(n^{-1})$, so that
\[
    I\bigg(0,\sqrt{3(n+1)}\bigg)\ll \sqrt{n}\int_{0}^{\sqrt{3(n+1)}} \big(|B_n(x)R_n(x)|+R_n(x)^2 \big)\, \mathrm{d}x = O\left(\frac{1}{\sqrt{n}}\right).\]
To bound $I(\sqrt{3(n+1)},x_1)$, use Stirling's approximation as above,
\[
    I(\sqrt{3(n+1)}, x_1) \ll \int_{\sqrt{3(n+1)}}^{x_1} \frac{1}{n\sin(\alpha(x))^{5/2}}
    +\frac{1}{n^{3/2}\sin(\alpha(x))^{4}}\, \mathrm{d}x, 
\]
apply the following technical lemma (taking $\xi=1/3$) and conclude that the integral is $O(1/n^{1/3})$.

\begin{lem} \label{sinlemma}
   Let $x=\sqrt{4(n+1)-\gamma n^\xi}$ for some $\xi \in (0,1)$ and any $\gamma\in\mathbb{R}_{>0}$ for which $\gamma n^\xi <4(n+1)$. Then for $\beta>2$,
    \[
        \int_{0}^x \sin(\alpha(t))^{-\beta}\,\mathrm{d}t \ll n^{(\xi-1/2)+\beta(1-\xi)/2}.
    \]
   for $n$ large enough. 
\end{lem}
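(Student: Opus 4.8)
The plan is to change variables to $\alpha$, understand the behaviour of $\sin\alpha$ near the endpoint $t = x$, and show that the singularity there is integrable with the claimed size. Recall $\alpha(t) = \arccos\big(t/(2\sqrt{n+1})\big)$, so $t = 2\sqrt{n+1}\cos\alpha$ and $dt = -2\sqrt{n+1}\sin\alpha\,d\alpha$; as $t$ runs from $0$ to $x$, the angle $\alpha$ runs from $\pi/2$ down to $\alpha_0 \stackrel{\text{def}}{=} \alpha(x)$. Hence
\[
    \int_0^x \sin(\alpha(t))^{-\beta}\,dt = 2\sqrt{n+1}\int_{\alpha_0}^{\pi/2}\sin(\alpha)^{1-\beta}\,d\alpha.
\]
Since $\beta > 2$ the integrand $\sin(\alpha)^{1-\beta}$ blows up as $\alpha\to 0^+$, and the whole integral is governed by the lower limit $\alpha_0$: for small $\alpha$, $\sin\alpha \asymp \alpha$, so $\int_{\alpha_0}^{c}\alpha^{1-\beta}\,d\alpha \asymp \alpha_0^{2-\beta}$ for any fixed $c$, while the remaining part of the integral over $[c,\pi/2]$ is $O(1)$ and hence negligible against the (diverging, as $n\to\infty$) main term. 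So up to constants the integral is $\sqrt{n}\,\cdot\,\alpha_0^{2-\beta}$.

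It remains to estimate $\alpha_0 = \arccos\big(x/(2\sqrt{n+1})\big)$ with $x = \sqrt{4(n+1)-\gamma n^\xi}$. Here $\cos\alpha_0 = x/(2\sqrt{n+1}) = \sqrt{1 - \gamma n^\xi/(4(n+1))}$, so $\cos^2\alpha_0 = 1 - \gamma n^\xi/(4(n+1))$ and therefore $\sin^2\alpha_0 = \gamma n^\xi/(4(n+1)) \asymp n^{\xi-1}$, giving $\sin\alpha_0 \asymp n^{(\xi-1)/2}$. Because $\gamma n^\xi < 4(n+1)$ with $\xi<1$, $\alpha_0$ is small for large $n$, so $\alpha_0 \asymp \sin\alpha_0 \asymp n^{(\xi-1)/2}$. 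Substituting, the integral is of order $\sqrt{n}\cdot\big(n^{(\xi-1)/2}\big)^{2-\beta} = n^{1/2}\,n^{(\xi-1)(2-\beta)/2} = n^{(\xi - 1/2) + \beta(1-\xi)/2}$, which is exactly the claimed bound. (One checks the bookkeeping: $\tfrac12 + \tfrac{(\xi-1)(2-\beta)}{2} = \tfrac12 + (\xi-1) - \tfrac{\beta(\xi-1)}{2} = (\xi-\tfrac12) + \tfrac{\beta(1-\xi)}{2}$.)

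The only real point requiring care — the "main obstacle," though it is mild — is justifying that the contribution near $\alpha=0$ genuinely dominates and that the constants are uniform in the admissible range of $\gamma$ and for all large $n$: one wants $\alpha_0 \to 0$ so that the elementary comparison $\tfrac{2}{\pi}\alpha \le \sin\alpha \le \alpha$ on $[0,\pi/2]$ can be applied on $[\alpha_0, \pi/2]$, and one wants $\beta>2$ strictly so that $\alpha_0^{2-\beta}$ is the dominant term rather than a bounded one. Both hold under the stated hypotheses. Everything else is a routine substitution and an elementary power-function integral, so I would present the change of variables, the two-line estimate of $\sin\alpha_0$, and the final exponent arithmetic, and stop there.
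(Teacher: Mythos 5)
Your proof is correct and takes essentially the same route as the paper: both perform the substitution $t = 2\sqrt{n+1}\cos\alpha$, bound $\int_{\alpha_0}^{\pi/2}(\sin\alpha)^{1-\beta}d\alpha \ll \alpha_0^{2-\beta}$ via the elementary comparison $\sin\alpha \asymp \alpha$ on $(0,\pi/2]$, and then estimate $\alpha_0$ to be of order $n^{(\xi-1)/2}$. The only cosmetic difference is that you compute $\sin\alpha_0$ directly from $\cos^2\alpha_0$ and use $\alpha_0 \ge \sin\alpha_0$, whereas the paper invokes the equivalent inequality $\arccos(u)\ge\sqrt{1-u^2}$; the exponent arithmetic and the constant-uniformity considerations are handled the same way.
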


\medskip
\noindent{\textsc{proof of proposition \ref{vverror}.}
    Recall that $\alpha(t)=\arccos{\big(t/\sqrt{4(n+1)}\big)}$ for $0 \leq t <\sqrt{4(n+1)}$, giving
    \[
        -\sin\alpha \,\mathrm{d}\alpha = \frac{\mathrm{d}t}{\sqrt{4(n+1)}}
    \]
    and in turn, for any $x=\sqrt{4(n+1)-\gamma n^\xi}$ where $\gamma\in\mathbb{R}_{>0}$, $\xi \in (0,1)$,
    \begin{align*}
        \int_{0}^{x}\sin \big(\alpha(t)\big)^{-\beta} \mathrm{d}t &= -\int_{\arccos{\big(x/\sqrt{4(n+1)}\big)}}^{\pi/2}\sqrt{4(n+1)}(\sin\alpha)^{1-\beta}\mathrm{d}\alpha.\\
    \end{align*}
    Since $\alpha\in (0,\pi/2]$, we have 
    \[
        \frac{2}{\pi} \leq \frac{\sin(\alpha)}{\alpha} \leq 1,
    \]
    from which it follows that the integral above is smaller than
    \begin{equation} \label{intqt}
        C\sqrt{4(n+1)}*\left(\frac{1}{\arccos{\big(x/\sqrt{4(n+1)}\big)}}\right)^{\beta-2}
    \end{equation}
    for a constant $C$ that can depend on $\beta$, assuming $\beta>2$. As $\arccos(x)\geq \sqrt{1-x^2}$ for any $|x|\leq 1$, if $x=\sqrt{4(n+1)(1-\delta)}$ for $\delta<1$, (\ref{intqt}) is bounded from above by
    \begin{align*}
        C\sqrt{4(n+1)}*\left(\frac{1}{\sqrt{1-x^2/{4(n+1)}}}\right)^{\beta-2} \leq C\sqrt{4(n+1)}\,\delta^{-(\beta-2)/2} \ll \,\sqrt{n\delta^{-(\beta-2)}}.
    \end{align*}
    The lemma then follows after setting $\delta=n^{\xi-1}$ for any $\xi\in(0,1)$.

\section{Appendix: Simulations} The following histograms were obtained by sampling a random \textsc{gue}(n) eigenvalue for different $n$ using our algorithm. In each case, 100000 points were sampled. The line in black corresponds to the density
\[
    \frac{1}{n}\sum_{k=0}^{n-1}\phi_k(x)^2.
\]
\clearpage
\begin{figure*}[t!]
        \subfloat[$n=5$]{%
            \includegraphics[width=.48\linewidth]{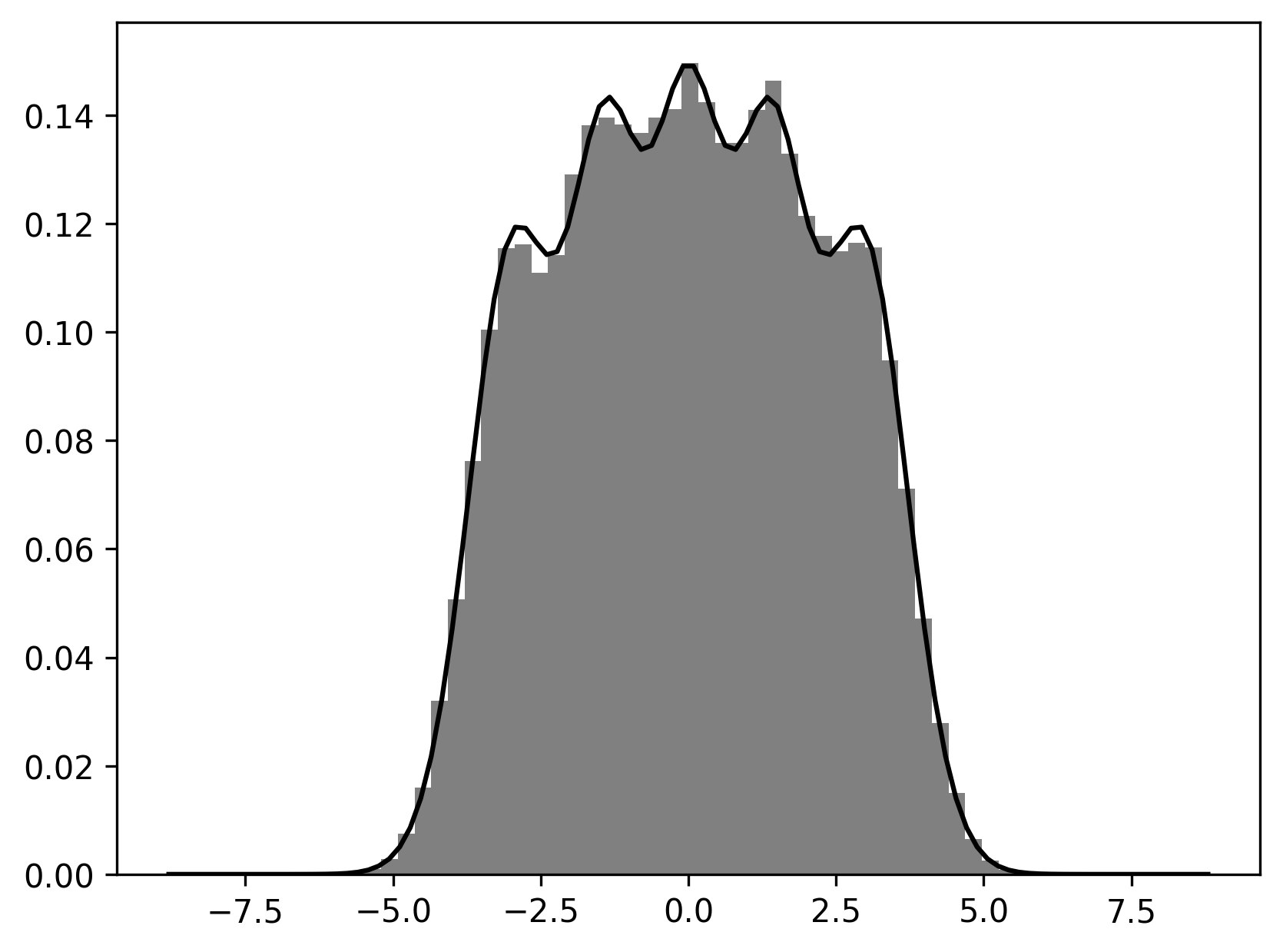}%
            \label{subfig:a}%
        }\hfill
        \subfloat[$n=10$]{%
            \includegraphics[width=.48\linewidth]{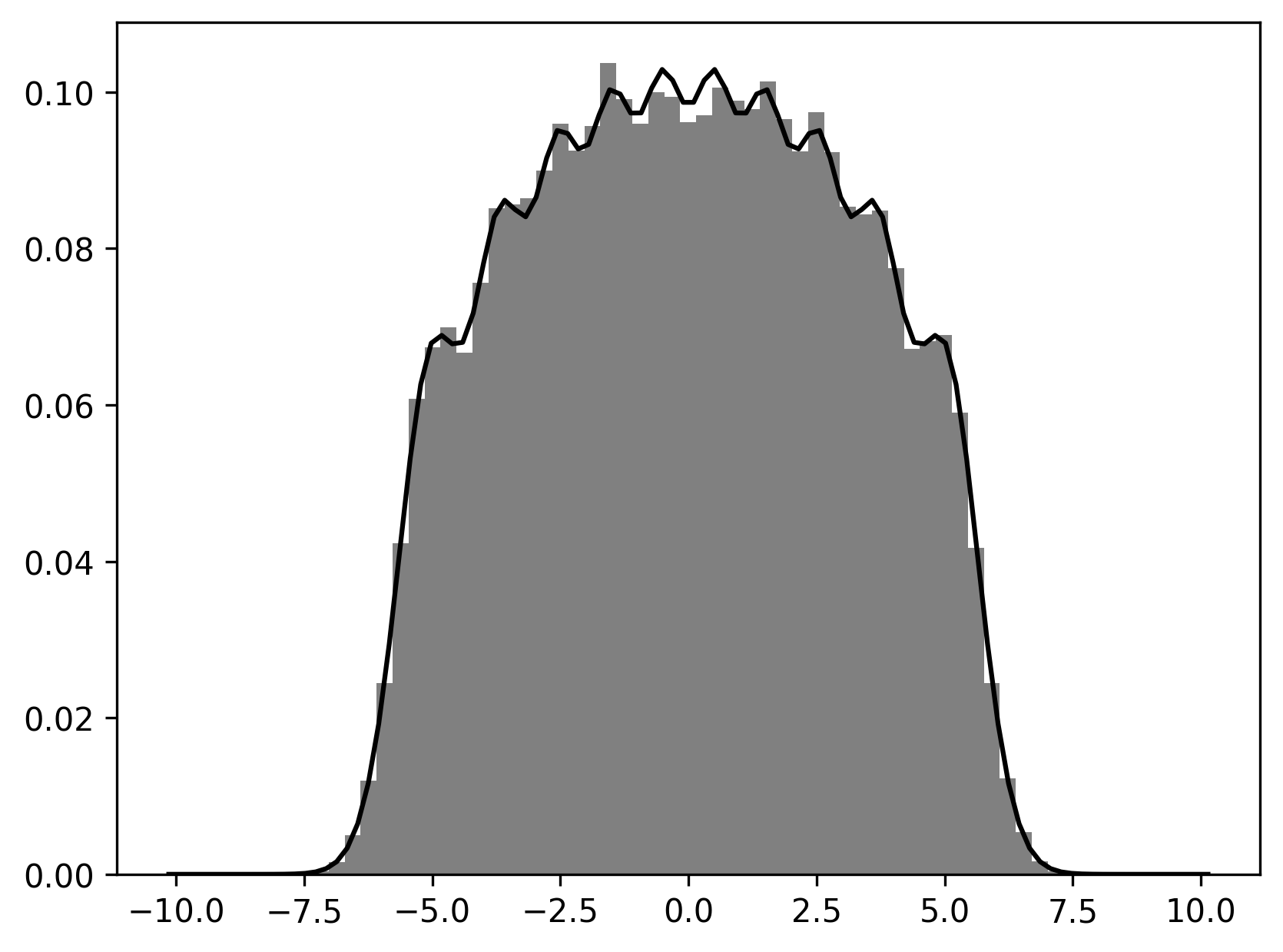}%
            \label{subfig:b}%
        }\\
        \subfloat[$n=15$]{%
            \includegraphics[width=.48\linewidth]{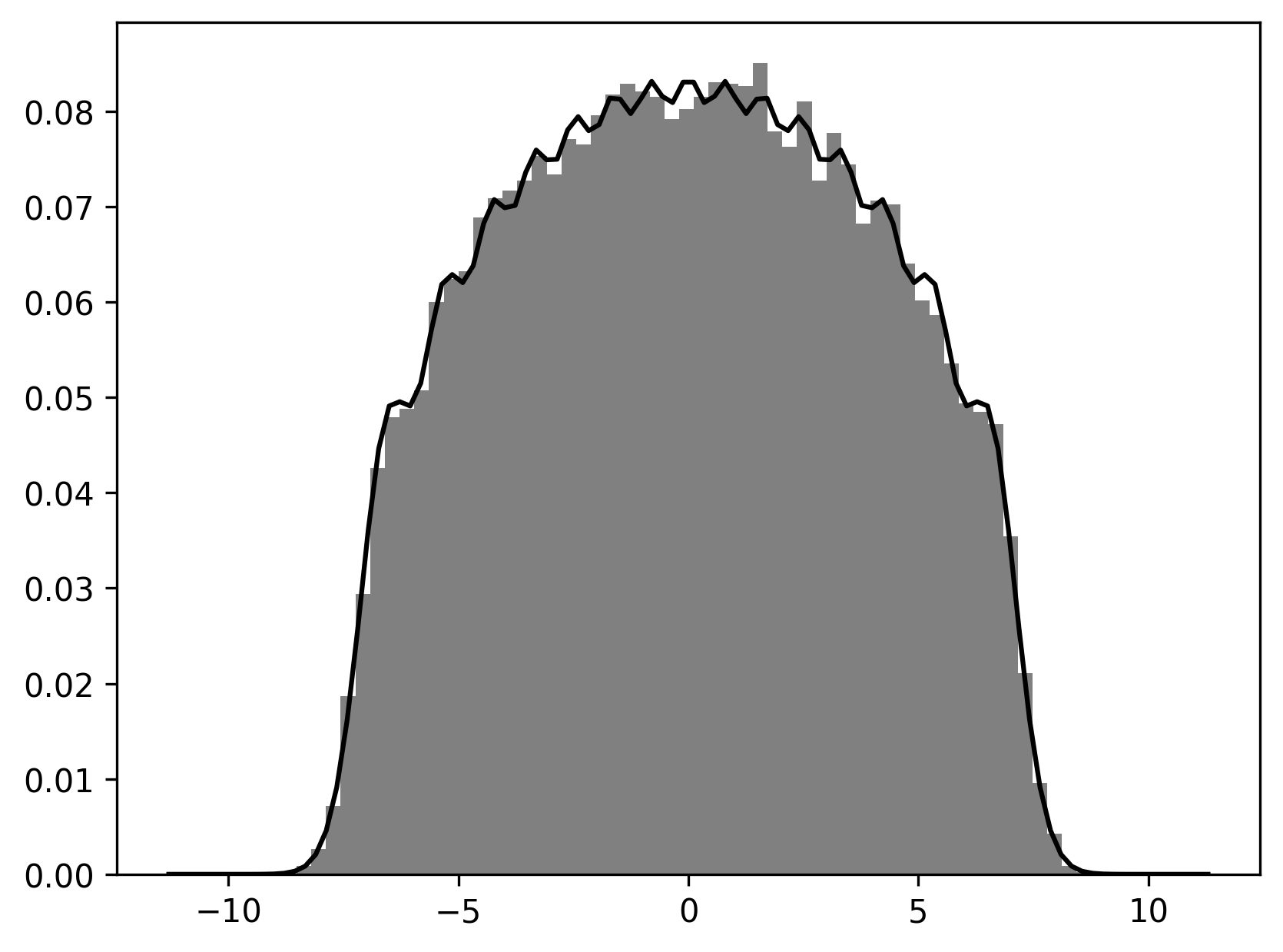}%
            \label{subfig:c}%
        }\hfill
        \subfloat[$n=20$]{%
            \includegraphics[width=.48\linewidth]{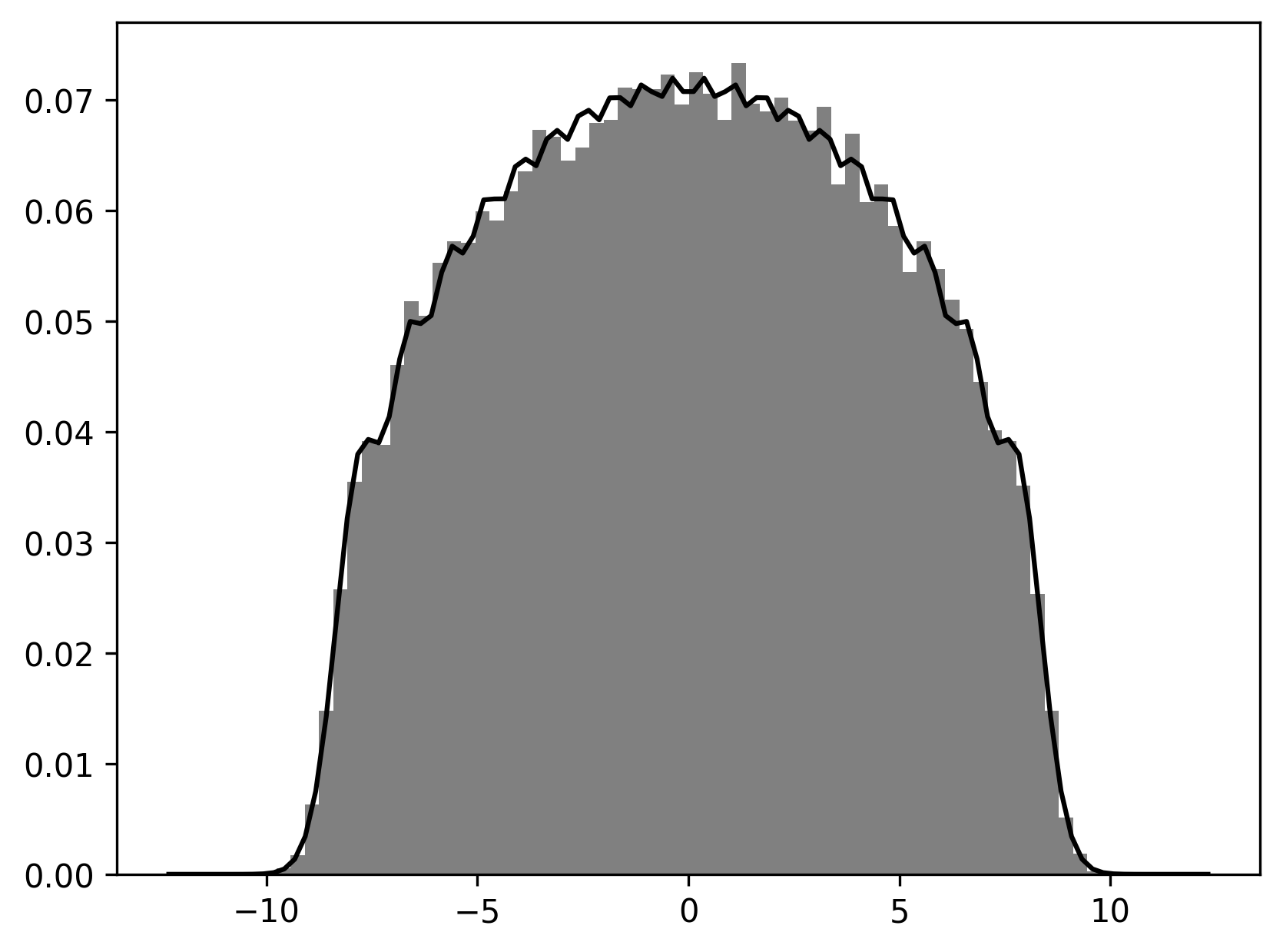}%
            \label{subfig:d}%
        }
        \caption{}
        \label{fig:fig}
    \end{figure*}

\bibliographystyle{abbrv}
\bibliography{biblio}

\end{document}